\newenvironment{purple}{\color{magenta}}{}
\definecolor{darkgreen}{rgb}{0,0.5,0}
\def\boldit#1{\textit{\textbf{#1}}}
\def\qed{\hfill$\square$}
\numberwithin{equation}{section}
\newtheorem{lem}[equation]{\sc Lemma}
\newtheorem{cor}[equation]{\sc Corollary}
\newtheorem{prop}[equation]{\sc Proposition}
\newtheoremstyle{notation}{3pt}{3pt}{}{}{\itshape}{:}{.5em}{\thmname{#1}}
\theoremstyle{notation}
\newtheorem{defin}{\it Definition}
\newcounter{boxsize}
\newcounter{tempcounter}
\def\arr#1#2{\arrow <2mm> [0.25,0.75] from #1 to #2}
\def\ssize{\scriptscriptstyle}
\newcommand\boxes[2]{\ifthenelse{#2=3}{$\scriptstyle P_2^{#1}$}{%
                                       $\scriptstyle P_{#2}^{#1}$}}
  \newcommand\F{{\sf F}}
  \newcommand\A{{\sf A}}
  \newcommand\C{{\sf C}}
  \newcommand\E{{\sf E}}
  \newcommand\G{{\sf G}}
  \newcommand\B{{\sf B}}
  \newcommand\D{{\sf D}}
  \newcommand\p{$'$}
  \renewcommand\c{$_\prime$}
  \newcommand\s{$\sharp$}
  \newcommand\f{$\flat$}
  \newcommand\LL{{\sf L}}
  \newcommand\PP{{\sf P}}
  \newcommand\RR{{\sf R}}
\begin{document}
  
  \begin{center}
    {\large\bf From Schritte and Wechsel to Coxeter Groups}

    \bigskip
    {\normalsize Markus Schmidmeier\footnote{Florida Atlantic University, Boca Raton, FL 33431-0991, {\sf markus@math.fau.edu}}}
  
 \bigskip

  \begin{minipage}{12cm}
    \footnotesize
  {\bf Abstract:} The  PLR-moves of neo-Riemannian theory, when considered 
  as reflections on the edges of an equilateral triangle, define 
  the Coxeter group~$\widetilde S_3$.  The 
  elements are in a natural one-to-one correspondence with
  the triangles in the infinite Tonnetz.
  The left action of $\widetilde S_3$ on the Tonnetz gives rise to
  interesting chord sequences.
  We compare the system of transformations in $\widetilde S_3$ with the
  system of Schritte and Wechsel introduced by Hugo Riemann in 1880.
  Finally, we consider the point reflection group as it captures well
  the transition from Riemann's infinite Tonnetz to
  the finite Tonnetz of neo-Riemannian theory.

  \smallskip {\bf Keywords:} Tonnetz, neo-Riemannian theory, Coxeter groups.
\end{minipage}
  \end{center}

\begin{abstract}
  The  PLR-moves of neo-Riemannian theory, when considered 
  as reflections on the edges of an equilateral triangle, define 
  the Coxeter group~$\widetilde S_3$.  The 
  elements are in a natural one-to-one correspondence with
  the triangles in the infinite Tonnetz.
  The left action of $\widetilde S_3$ on the Tonnetz gives rise to
  interesting chord sequences.
  We compare the system of transformations in $\widetilde S_3$ with the
  system of Schritte and Wechsel introduced by Hugo Riemann in 1880.
  Finally, we consider the point reflection group as it captures well
  the transition from Riemann's infinite Tonnetz to
  the finite Tonnetz of neo-Riemannian theory.
  \keywords{Tonnetz \and neo-Riemannian theory \and Coxeter groups.}
\end{abstract}

\section{PLR-moves revisited}

In neo-Riemannian theory, chord progressions are analyzed in terms
of elementary moves in the Tonnetz.
For example, the process of going from tonic to dominant
(which differs from the tonic chord
in two notes) is decomposed as a product
of two elementary moves of which each changes
only one note.

\smallskip
The three elementary moves considered are the PLR-transformations;
they map a major or minor triad to the minor or major triad
adjacent to one of the
edges of the triangle representing the chord in the Tonnetz.
See \cite{cohn} and \cite{cohn-audacious}.

\begin{figure}[ht]
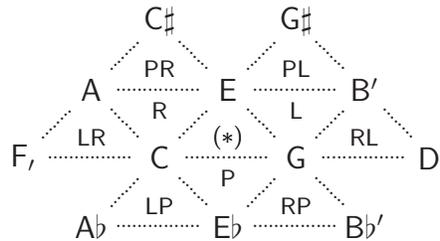

    \centerline{$    \beginpicture
    \setcoordinatesystem units <5ex,5ex>
    \put {\A\f} at 1 1 
    \put {\F\c} at 0 2
    \put {\E\f} at 3 1 
    \put {\C} at   2 2
    \put {\A} at   1 3 
    \put {\B\f\p} at 5 1 
    \put {\G} at 4 2 
    \put {\E} at 3 3
    \put {\C\s} at 2 4 
    \put {\D\p} at 6 2 
    \put {\B\p} at   5 3
    \put {\G\s} at   4 4 
    \put {\scriptsize $(*)$} at 3 2.3
    \setdots<2pt>
    \plot 0.4 2  1.6 2 /
    \plot 4.4 2  5.6 2 /
    \plot 1.4 1  2.6 1 /
    \plot 3.4 1  4.6 1 /
    \plot 1.4 3  2.6 3 /
     \plot 3.4 3  4.6 3 /
     \plot 2.4 2  3.6 2 /
     \plot 2.3 3.7  2.7 3.3 /
     \plot 4.3 1.7  4.7 1.3 /
     \plot 4.3 3.7  4.7 3.3 /
     \plot 5.3 2.7  5.7 2.3 /
     \plot 1.3 2.7  1.7  2.3 /
     \plot 2.3 1.7  2.7 1.3 /
     \plot 3.3 2.7  3.7 2.3 /
     \plot 0.3 2.3  0.7 2.7 /
     \plot 1.3 3.3  1.7 3.7 /
     \plot 1.3 1.3  1.7 1.7 /
     \plot 3.3 3.3  3.7 3.7 /
     \plot 2.3 2.3  2.7 2.7 /
     \plot 3.3 1.3  3.7 1.7 /
     \plot 4.3 2.3  4.7 2.7 /
     \put {\scriptsize $(*)$} at 3 2.3
     \put {\scriptsize\sf R} at 2 2.7
     \put {\scriptsize\sf L} at 4 2.7
     \put {\scriptsize\sf P} at 3 1.7
     \put {\scriptsize\sf LR} at 1 2.3
     \put {\scriptsize\sf PR} at 2 3.3
     \put {\scriptsize\sf LP} at 2 1.3
     \put {\scriptsize\sf PL} at 4 3.3
     \put {\scriptsize\sf RP} at 4 1.3
     \put {\scriptsize\sf RL} at 5 2.3
    \endpicture
    $}
    \caption{Triads in the vicinity of the \C-\E-\G-chord}
    \label{figure-PLR-2}
\end{figure}

Our paper is motivated by the observation that PLR-moves, while they provide
a tool to measure distance between triads, are not continuous as operations on the Tonnetz:

\smallskip
Let $s$ be a sequence of PLR-moves.
Applying $s$ to a pair of major chords results in a parallel shift
of those two chords.
However, applying the sequence $s$ to a major chord and an adjacent
minor chord makes the two chords drift apart.
For example, applying the sequence $s=\,${\scriptsize\sf RL} to the
chord labeled $(*)$ in Figure~\ref{figure-PLR-2}
yields the triad labeled {\scriptsize\sf RL}
on the right, while $s$ applied to {\scriptsize \sf P}
gives {\scriptsize\sf RLP} on the left, left of the triangle labelled {\scriptsize\sf LP}.

\smallskip
In this paper we consider 
the three reflections $s_1$, $s_2$, $s_3$ on the
edges of a fixed equilateral triangle, see Figure~\ref{figure-plr}.

\begin{figure}[ht]
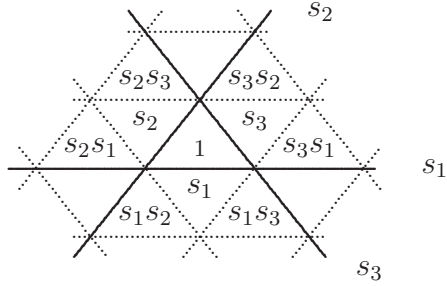

    \centerline{$    \beginpicture
      \setcoordinatesystem units <4ex,5ex>
      \put {} at -1 .5
      \put {} at 7 5
    \plot -.5 2  6.2 2 /
    \plot .7 .7   4.3 4.3 /
    \plot 1.7 4.3  5.3 .7 /
    \setdots<2pt>
    \plot .7 1  5.3 1 /
    \plot .7 3  5.3 3 /
    \plot 1.7 4  4.3 4 /
    \plot -.3 1.7  2.3 4.3 /
    \plot 2.7 .7  5.3 3.3 /
    \plot 4.7 .7  6.3 2.3 /
    \plot 3.7 4.3  6.3 1.7 /
    \plot -.3  2.3  1.3 .7 /
    \plot .7 3.3  3.3 .7 /
    \put {\scriptsize $1$} at 3 2.3
     \put {$s_1$} at 7.3 2
     \put {$s_2$} at 5.2 4.3
     \put {$s_3$} at 6.1 .5
     \put {$s_2$} at 2 2.7
     \put {$s_3$} at 4 2.7
     \put {$s_1$} at 3 1.7
     \put {$s_2s_1$} at 1 2.3
     \put {$s_2s_3$} at 2 3.3
     \put {$s_1s_2$} at 2 1.3
     \put {$s_3s_2$} at 4 3.3
     \put {$s_1s_3$} at 4 1.3
     \put {$s_3s_1$} at 5 2.3
    \endpicture
    $}
    \caption{The reflections $s_1$, $s_2$, $s_3$}
    \label{figure-plr}
\end{figure}

\smallskip
The three reflections satisfy the relations
$$s_1^2=s_2^2=s_3^2=1,\quad (s_1s_2)^3=(s_2s_3)^3=(s_3s_1)^3=1$$
which are the defining relations of the Coxeter group $\widetilde S_3$
corresponding to the affine irreducible Coxeter system
$\widetilde A_2$

\begin{figure}[h]
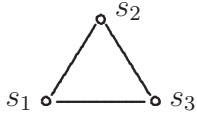

    \centerline{$    \beginpicture
      \setcoordinatesystem units <4ex,3ex>
      \circulararc 360 degrees from 0 .1 center at 0 0
      \circulararc 360 degrees from 1 2.1  center at 1 2
      \circulararc 360 degrees from 2 .1 center at 2 0
      \plot .2 0  1.8 0 /
      \plot .1 .15  .9 1.85 /
      \plot 1.9 .15  1.1 1.85 /
      \put {$s_1$} at -.5 0
      \put {$s_2$} at 1.5 2.2
      \put {$s_3$} at 2.5 0
    \endpicture
    $}
    \caption{Affine Coxeter system $\widetilde A_2$}
    \label{figure-a2}
\end{figure}

\section{The Coxeter group $\widetilde S_3$}
\label{section-coxeter}

We collect some results about the Coxeter group $\widetilde S_3$, most of the material
is adapted from \cite[Section~8.3]{bb}.
The group $\widetilde S_3$ can be realized as the group of \boldit{affine permutations},
$$\widetilde S_3=\{f:\mathbb Z\to \mathbb Z:f\,\text{bijective},f(-1)+f(0)+f(1)=0,\forall n:f(n+3)=f(n)+3\},$$
with multiplication the composition of maps.
Due to the last condition, it suffices to record the values of $f$ on the window $\{-1,0,1\}$, so
$$\widetilde S_3=\{[a,b,c]\in\mathbb Z^3:a+b+c=0, a,b,c\;\text{pairwise incongruent modulo}\;3\}.$$
Here, $1=[-1,0,1]$, $s_1=[0,-1,1]$, $s_2=[-1,1,0]$, $s_3=[-2,0,2]$ and composition of affine
permutations yields the multiplication rule
$$[a,b,c]\cdot s_i=\left\{\begin{array}{cl}\ [b,a,c], & \text{if}\;i=1\\
\ [a,c,b], & \text{if}\;i=2\\
\ [c-3,b,a+3], & \text{if}\;i=3.\end{array}\right.$$

\smallskip
Affine permutations, unlike sequences of reflections, provide a unique name for each element in $\widetilde S_3$.
The following result permits us to 
write the Coxeter element into the triangle to which
the corresponding sequence of reflections maps the identity element.
In Figure~\ref{figure-cox} we omit the brackets and place minus-signs under the numbers to improve readability.

\def\u{\underline}
\begin{figure}[ht]
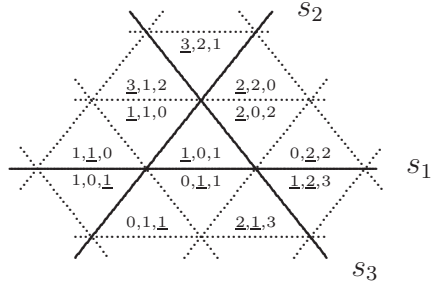

    \centerline{$    \beginpicture
      \setcoordinatesystem units <4ex,5ex>
      \put {} at -1 .5
      \put {} at 7 5
    \plot -.5 2  6.2 2 /
    \plot .7 .7   4.3 4.3 /
    \plot 1.7 4.3  5.3 .7 /
    \setdots<2pt>
    \plot .7 1  5.3 1 /
    \plot .7 3  5.3 3 /
    \plot 1.7 4  4.3 4 /
    \plot -.3 1.7  2.3 4.3 /
    \plot 2.7 .7  5.3 3.3 /
    \plot 4.7 .7  6.3 2.3 /
    \plot 3.7 4.3  6.3 1.7 /
    \plot -.3  2.3  1.3 .7 /
    \plot .7 3.3  3.3 .7 /
    \put {$\ssize \u1,0,1$} at 3 2.2
     \put {$s_1$} at 7 2
     \put {$s_2$} at 5 4.3
     \put {$s_3$} at 6 .5
     \put {$\ssize \u1,1,0$} at 2 2.8
     \put {$\ssize \u2,0,2$} at 4 2.8
     \put {$\ssize 0,\u1,1$} at 3 1.8
     \put {$\ssize 1,\u1,0$} at 1 2.2
     \put {$\ssize \u3,1,2$} at 2 3.2
     \put {$\ssize 0,1,\u1$} at 2 1.2
     \put {$\ssize \u2,2,0$} at 4 3.2
     \put {$\ssize \u2,\u1,3$} at 4 1.2
     \put {$\ssize 0,\u2,2$} at 5 2.2
     \put {$\ssize 1,0,\u1$} at 1 1.8
     \put {$\ssize \u3,2,1$} at 3 3.8
     \put {$\ssize \u1,\u2,3$} at 5 1.8
    \endpicture
    $}
    \caption{Affine permutations}
    \label{figure-cox}
\end{figure}

\begin{prop}\label{prop-correspondence}
  The elements in $\widetilde S_3$ are in one-to-one correspondence with the triangles in the Tonnetz.
\end{prop}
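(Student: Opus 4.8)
The plan is to realize the correspondence as the orbit map $g\mapsto g\cdot T_0$, where $T_0$ denotes the fundamental triangle (the one carrying the label $1=[-1,0,1]$ in Figure~\ref{figure-cox}), and to show that this map from $\widetilde S_3$ to the set of triangles in the Tonnetz is a bijection. The group acts on the plane by the isometries generated by the three reflections $s_1,s_2,s_3$; since each generator is a reflection across an edge of the triangulation, every generator permutes the triangles, so the orbit map is well defined and does land in the set of triangles.

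For surjectivity I would argue by connectivity. The dual graph of the triangulation---triangles as nodes, with a link joining two triangles that share an edge---is connected, so it suffices to show that the orbit $\widetilde S_3\cdot T_0$ is closed under passing to an edge-neighbour. Suppose $T=g\cdot T_0$ lies in the orbit and $T'$ shares with $T$ the edge $g(e)$, where $e$ is an edge of $T_0$ carrying the reflection $s_i$. Then the reflection across $g(e)$ is the conjugate $g s_i g^{-1}$, so that $T'=g s_i g^{-1}\cdot T=g s_i\cdot T_0$ again lies in the orbit. By induction along a path in the dual graph, every triangle is reached, and the map is onto.

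For injectivity---equivalently, that the stabilizer of $T_0$ is trivial, so that the action is \emph{simply} transitive---I would invoke the geometric interpretation of the Coxeter length function recorded in \cite[Section~8.3]{bb}: for $g\in\widetilde S_3$ the length $\ell(g)$ equals the number of reflection lines separating $T_0$ from $g\cdot T_0$. If $g\cdot T_0=T_0$, then no line separates the two triangles, whence $\ell(g)=0$ and therefore $g=1$. Together with surjectivity this yields the bijection. As a concrete alternative one can match the construction with the window notation: the multiplication rule for the $s_i$ recorded above lets one verify directly that the labels placed in Figure~\ref{figure-cox} are pairwise distinct and exhaust the admissible windows $[a,b,c]$, recovering the statement by explicit computation.

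I expect the substantive point to be injectivity: transitivity is a soft connectivity argument, whereas ruling out a nontrivial element fixing $T_0$ requires that $T_0$ genuinely be a strict fundamental domain. The cleanest route is the wall-counting formula for $\ell(g)$, and the only thing to be checked carefully is that the reflection lines of $\widetilde S_3$ are exactly the infinitely extended edge-lines of the Tonnetz triangulation, so that ``walls separating two triangles'' coincides with the combinatorial notion of crossing edges in the dual graph.
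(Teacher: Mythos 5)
Your argument is correct, and it splits the same way the paper's does: a connectivity argument for surjectivity and a separate argument for injectivity. For surjectivity the two proofs are mirror images of one another --- the paper descends from an arbitrary triangle $\Delta$ back to $(*)$ by reflecting across a separating axis and arguing the process terminates, while you ascend from $T_0$ along a path in the dual graph using the conjugate reflections $gs_ig^{-1}$; these are essentially the same idea. The real divergence is in injectivity. You invoke the general Coxeter-theoretic fact that $\ell(g)$ equals the number of walls separating $T_0$ from $g\cdot T_0$, so a stabilizing element has length $0$; this is clean and soft, but, as you yourself note, it requires knowing that the geometric realization at hand is the standard one for $\widetilde A_2$, i.e.\ that the reflection hyperplanes of $\widetilde S_3$ are exactly the edge-lines of the Tonnetz triangulation. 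The paper instead proves injectivity by explicit computation: the lemma following the proposition extracts from the window $[a,b,c]$ the three coordinates $c_i([a,b,c])$ of the triangle's center with respect to the axes $s_1,s_2,s_3$, verified by induction on the length of a reduced word using the multiplication rule. That computation is more work, but it is not wasted: the paper reuses the coordinates $c_i$ immediately afterwards to compute the minimal number of reflections $d=\frac12\sum_i|c_i|$ in Corollary~\ref{corollary}, which your wall-counting formula would also give but less explicitly. Your ``concrete alternative'' via the window notation is in fact the paper's actual route, so nothing is missing; the only point to be careful about in your main argument is the identification of walls with edge-lines, which you correctly flag.
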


The result is well-known, in fact, the Tonnetz picture is commonly used to visualize the tesselation
of the affine plane given by the Coxeter system $\widetilde A_2$, see for example \cite[Figure~1.2]{bb}.
We give the proof to obtain relevant details of this tesselation.

\smallskip
Using the correspondence in Proposition~\ref{prop-correspondence} we will identify the elements in $\widetilde S_3$
with the triangles in the Tonnetz.  Thus, the group $\widetilde S_3$ acts on the Tonnetz via left multiplication
and via right multiplication, and both actions are simply transitive.

\begin{proof}
  The map given by sending a sequence of reflections to the triangle $\Delta$
  obtained by applying the reflections to the triangle marked $(*)$
  is an onto map:
  Unless $\Delta$ is the triangle $(*)$ itself, there exists at least one axis $s_i$
  between the two triangles.  Reflecting $\Delta$ on $s_i$ gives a triangle
  which is closer to $(*)$, hence the process of replacing $\Delta$ by $s_i(\Delta)$
  terminates after finitely many steps.

  \smallskip
  Equivalent sequences modulo the relations give the same triangle, hence we obtain a map
  from $\widetilde S_3$ to the set of triangles.  This map is injective since each triple
  $[a,b,c]$ records the coordinates of the triangle as described in the following lemma.\qed
\end{proof}

\begin{lem}
  For an affine permutation $[a,b,c]$ and for $i=1,\ldots,3$, define
  $$c_i([a,b,c])=\left\{\begin{array}{cl}\; a+1 \;& \text{if}\quad a\equiv i\mod 3\\
  b & \text{if}\quad b\equiv i\mod 3\\
  c-1 & \text{if}\quad c\equiv i\mod 3.  \end{array}\right.$$
  Then the coordinate of the center of the triangle corresponding to the Coxeter element $[a,b,c]$ in the Tonnetz
  with respect to the axis $s_i$ is $c_i$.
  (Here, the axis $s_1$ points to the left, $s_2$ to the top right and $s_3$
  to the bottom right.)
\end{lem}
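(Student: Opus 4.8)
The plan is to prove the statement constructively by induction along the reflection sequences used to build the correspondence in Proposition~\ref{prop-correspondence}. First I would fix the geometric meaning of the coordinate with respect to $s_i$. Since the reflection $s_i$ preserves displacement measured \emph{along} its mirror and reverses displacement across it, the coordinate $c_i^{\mathrm{geo}}$ attached to $s_i$ is the signed integer position measured in the direction in which the axis $s_i$ points, normalized so that the base triangle $(*)$ has coordinates $(0,0,0)$. A direct check on the three neighbors of $(*)$ in Figure~\ref{figure-cox} shows that passing to the $s_i$-neighbor leaves $c_i^{\mathrm{geo}}$ fixed and changes the other two coordinates by $+1$ and $-1$; this both confirms the normalization and matches the formula on the identity and on its neighbors.

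The key observation is that one should track the \emph{left} action, not the right one. Building an element as a product of generators and applying the reflections to $(*)$, as in the proof of Proposition~\ref{prop-correspondence}, amounts to left multiplication by one generator at a time, and left multiplication $s_i\cdot f=s_i\circ f$ acts on the window by relabeling the \emph{values} of $f$: it interchanges two residue classes modulo $3$ and fixes the third, shifting each affected value by a uniform $\pm1$. Geometrically the same step is the fixed isometric reflection $s_i$ applied to the whole current triangle, so it induces one and the same affine involution $\sigma_i$ on coordinate triples for every element; evaluating $\sigma_i$ on $(*)$ and its neighbors pins it down as the map that fixes $c_i$ and interchanges the other two coordinates with a $\pm1$ shift. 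By contrast, right multiplication does \emph{not} act uniformly on the coordinates, which is exactly why the left action is the correct tool here.

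The heart of the argument is then a short, case-free computation that the formula transforms under left multiplication by $s_i$ by precisely this involution $\sigma_i$. The point is that left multiplication never moves an entry between the three window positions; it only permutes the residue classes of the values. Hence the entry that newly occupies a given residue class in $s_i\cdot f$ sits in the same position as the entry that occupied the swapped class of $f$, so the position-dependent correction ($+1,\,0,\,-1$ for the positions $a,b,c$) carries over unchanged while the value shifts by the prescribed $\pm1$; reading off the three classes yields exactly $\sigma_i$. Since $c_i^{\mathrm{geo}}$ and $c_i([a,b,c])$ agree at the identity and transform identically under left multiplication by each of $s_1,s_2,s_3$, and since every element of $\widetilde S_3$ is reached from the identity by such multiplications, they agree for all elements, which proves the lemma. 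The main obstacle to watch is the bookkeeping of orientations and of the $\pm1$ offsets: one must respect the direction conventions ($s_1$ to the left, $s_2$ to the top right, $s_3$ to the bottom right) and the fact that $c_i$ measures position \emph{along} the axis $s_i$ (so that crossing the $s_i$-mirror leaves $c_i$ fixed) rather than an index of the strips across it, in order to make every sign in $\sigma_i$ and in the formula line up.
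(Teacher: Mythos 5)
Your argument is correct, but it takes a genuinely different route from the paper's. The paper disposes of the lemma in one sentence: induct on the length of a word $s_{i_1}\cdots s_{i_n}$ using the displayed rule for $[a,b,c]\cdot s_i$, i.e.\ extend the gallery one chamber at a time by \emph{right} multiplication. Since right multiplication permutes the window positions (and, for $s_3$, shifts two entries by $3$), the positional corrections $+1,0,-1$ get reassigned at every step, and the geometric direction of the wall being crossed depends on the current triangle, so that inductive step hides a small case analysis. You instead induct by \emph{left} multiplication, and this is exactly what your approach buys: left multiplication by $s_i$ is one fixed reflection of the whole Tonnetz, hence acts on center coordinates by a single affine involution $\sigma_i$ that does not depend on the triangle, while on affine permutations it acts only on the values by residue class and never moves an entry between window positions, so the correction $+1,0,-1$ stays attached to its position and only the value shifts by $\pm1$. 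Both sides then transform by the same $\sigma_i$ (for instance $\sigma_1\colon(c_1,c_2,c_3)\mapsto(c_1,\,c_3-1,\,c_2+1)$, which indeed fixes the horizontal coordinate and sends the origin to the coordinates of $s_1$), and agreement at the identity propagates to all of $\widetilde S_3$; I verified the three involutions and the signs line up. For a final write-up you should add the paper's one-line remark that $c_i$ is well defined because exactly one of $a,b,c$ is congruent to $i$ modulo $3$, and you should spell out which two residue classes each $s_i$ interchanges (e.g.\ $s_1$ swaps the classes of $0$ and $-1$ and fixes that of $1$), since that bookkeeping is the only place a sign error could hide.
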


For example, the three triangles in Figure~\ref{figure-cox} above and below the intersection of the $s_1$-
and the $s_3$-axis all have  $a=-2$.  Hence the $s_1$-coordinate of their center is $c_1=-1$.

\begin{proof}
  The numbers $c_i$ are defined since exactly one of the entries $a,b,c$ is
  congruent to $i$ modulo 3.
  The formula in the lemma can be verified using induction on the length of
  a sequence $s_{i_1}s_{i_2}\cdots s_{i_n}$ defining the Coxeter element
  and the above multiplication formula.\qed
\end{proof}

We have seen that for a given triangle $\Delta$, there is a unique Coxeter element which maps $(*)$ to $\Delta$.
The minimum number of reflections can be computed by counting inversions, see \cite[Proposition~8.3.1]{bb}
or by measuring the distance from $(*)$:

\begin{cor}\label{corollary}
  Suppose the triangle $\Delta$ corresponds to the affine permutation $[a,b,c]$.
  The minimum number $d$ of reflections needed to map $(*)$ to $\Delta$
  is the sum of the positive coordinates $c_i$, or $d=\frac 12\sum_{i=1}^3|c_i([a,b,c])|$.\qed
\end{cor}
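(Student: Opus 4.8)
The plan is to split the statement into two parts: first show that the two displayed expressions for $d$ always agree, and then identify $d$ with the Coxeter length and prove the resulting identity by induction, tracking how the coordinates $c_i$ move under the multiplication rule. The equivalence of the two expressions is immediate from a conservation law. In the coordinate formula of the preceding lemma each of the three entries contributes to exactly one $c_i$, via $a\mapsto a+1$, $b\mapsto b$, $c\mapsto c-1$, the index being the residue of that entry modulo $3$; since the three entries realize the three distinct residues, summing gives $c_1+c_2+c_3=(a+1)+b+(c-1)=a+b+c=0$. Hence the positive $c_i$ sum to the negative of the negative ones, so $\sum_{c_i>0}c_i=\tfrac12\sum_{i=1}^3|c_i|$, and it suffices to prove $d=\sum_{c_i>0}c_i$.

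For the substance I would induct on $d=\ell(w)$, the Coxeter length of $w=[a,b,c]$, which by \cite[Prop.~8.3.1]{bb} equals the number of reflecting lines of the Tonnetz separating $(*)$ from $\Delta$. The base case $w=1=[-1,0,1]$ has coordinates $(0,0,0)$ and $d=0$. For the inductive step I would write a reduced expression $w=w's_i$ with $\ell(w')=\ell(w)-1$ and read the effect on coordinates off the multiplication rule: right multiplication by $s_1$ lowers $c_{\,\mathrm{res}(a)}$ by one and raises $c_{\,\mathrm{res}(b)}$ by one, $s_2$ does the symmetric transfer on the other two positions, and $s_3$ performs the analogous transfer after the shift by $3$ built into the rule. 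Thus each step moves the coordinate vector by one of the three unit root vectors $(0,-1,1)$, $(1,0,-1)$, $(-1,1,0)$, and the whole question reduces to showing that a \emph{length-increasing} step raises $\sum_{c_i>0}c_i$ by exactly one.

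The hard part, and the step I expect to be the real obstacle, is precisely this last compatibility. A transfer of one unit from coordinate $c_j$ to coordinate $c_k$ changes $\sum_{c_i>0}c_i$ by $+1$, $0$, or $-1$ according to the signs of $c_j$ and $c_k$, whereas the length changes by exactly $\pm1$; so one must rule out the possibility that a length-increasing multiplication merely shuffles a unit between two coordinates that are already positive. I would try to control this by translating the covering condition $\ell(w's_i)>\ell(w')$, through the affine-inversion criterion of \cite[Section~8.3]{bb}, into an inequality on the relevant window entries, and then run a case analysis on the residues to pin down the sign of the donor coordinate at the moment of transfer. The generator $s_3$ is the most delicate case, since the $\pm3$ shift couples the transfer to the residue bookkeeping, so I would settle $s_1,s_2$ first and then reduce $s_3$ to them using the threefold symmetry of the affine Coxeter system $\widetilde A_2$. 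This matching of the unit coordinate-transfer to the unit change in length is where the argument stands or falls.
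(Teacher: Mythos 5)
Your reduction of the two expressions to one another is correct: since $a,b,c$ realize the three residues, $\sum_i c_i=(a+1)+b+(c-1)=0$, so the positive coordinates sum to $\tfrac12\sum_i|c_i|$. Your reading of the multiplication rule is also correct: right multiplication by each generator transfers exactly one unit between two of the three coordinates. But the step you yourself flag as the crux --- that a length-increasing multiplication never shuffles a unit between two coordinates that are already positive --- is not merely hard to verify; it is false, and the corollary fails with it. Take $w=s_1s_2=[0,1,-1]$, with coordinates $(c_1,c_2,c_3)=(1,-2,1)$ and $\ell(w)=2$. The product $ws_1=s_1s_2s_1=[1,0,-1]$ is an ascent ($\ell=3$, since $s_1s_2s_1=s_2s_1s_2$ is the longest element of the parabolic $\langle s_1,s_2\rangle\cong S_3$), yet the transfer goes from $c_3=1$ to $c_1=1$, both positive, so the new coordinates are $(2,-2,0)$ and $\sum_{c_i>0}c_i$ stays at $2$ while the length becomes $3$. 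The same happens for $s_2s_3s_2=[-3,2,1]$ and $s_1s_3s_1=[-1,-2,3]$, all visible in Figure~\ref{figure-cox}.

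So your strategy, pushed one step further, produces a counterexample rather than a proof. This is not a defect of your plan: the paper offers no argument for the corollary (it is stated with a \emph{qed} and a pointer to counting inversions), and the actual inversion count of \cite[Proposition~8.3.1]{bb}, namely $\sum_{i<j}\bigl|\lfloor(w(j)-w(i))/3\rfloor\bigr|$, gives $3$ for $[1,0,-1]$ and does not coincide with $\tfrac12\sum_i|c_i([a,b,c])|$ for the coordinates defined in the preceding lemma. The two quantities agree on many elements (all those of length $\le 2$, and e.g.\ $s_1s_2s_3=[-4,1,3]$ with coordinates $(1,-3,2)$), which is presumably why the discrepancy went unnoticed, but they are different functions on $\widetilde S_3$. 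The honest conclusion of your induction is that the statement needs the separating-wall counts (equivalently the inversion numbers $\lfloor(w(j)-w(i))/3\rfloor$) in place of the barycentric coordinates $c_i$; with those, the unit-transfer argument you outline does close, because an ascent crosses exactly one new wall.
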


As a product of reflections, each Coxeter element gives rise to an operation on the Tonnetz
which may be a translation, a rotation, a reflection or a glide reflection.
In Figure~\ref{figure-type}, we put a hook inside each triangle so that the type of operation given by an
element of $\widetilde S_3$ 
can be read off from the position of the hooks in two corresponding triangles.

\smallskip
For example, going from $(*)$ in Figure~\ref{figure-type} to (1) is a reflection, to (2) a rotation,
to (3) a glide reflection, and to (6) a translation.

\def\hru{\put {} at -2 -1
  \put {} at 2 1
  \plot -.45 -.75  .45 -.75  .3 -.6 / }
\def\hrd{\put {} at -2 -1
  \put {} at 2 1
  \plot -.45 .75  .45 .75  .3 .6 / }
\def\hdu{\put {} at -2 -1
  \put {} at 2 1
  \plot 1.31 -.19  .94 -.56  .74 -.36 / }
\def\hud{\put {} at -2 -1
  \put {} at 2 1
  \plot 1.31 .19  .94 .56  .74 .36 / }
\def\huu{\put {} at -2 -1
  \put {} at 2 1
  \plot -1 -.19  -1.31 -.19  -.94 -.56 / }
\def\hdd{\put {} at -2 -1
  \put {} at 2 1
  \plot -1 .19  -1.31 .19  -.94 .56 / }

\begin{figure}[ht]
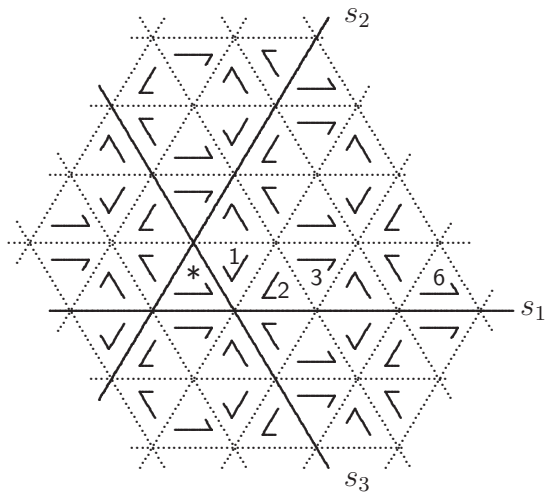

  \centerline{$    \beginpicture
    \setcoordinatesystem units <3ex,5ex>
      \put {} at -1 -1
      \put {} at 11 5.3
    \plot .5 1  11.8 1 /
    \plot 1.7 -.3   7.3 5.3 /
    \plot 1.7 4.3  7.3 -1.3 /
    \setdots<2pt>
    \plot -.5 2  10.8 2 /
    \plot .5 3  9.2 3 /
    \plot 1.5 4  8.2 4 /
    \plot 2.5 5  7.2 5 /
    \plot .5 1  11.2 1 /
    \plot 1.5 0  10.2 0 /
    \plot 2.5 -1  9.2 -1 /
    \plot .7 .7   5.3 5.3 /
    \plot -.3 1.7   3.3 5.3 /
    \plot 1.7 -.3   7.3 5.3 /
    \plot 2.7 -1.3   8.3 4.3 /
    \plot 4.7 -1.3   9.3 3.3 /
    \plot 6.7 -1.3   10.3 2.3 /
    \plot 8.7 -1.3   11.3 1.3 /
    \plot -.3 2.3  3.3 -1.3 /
    \plot .7 3.3  5.3 -1.3 /
    \plot 2.7 5.3  9.3 -1.3 /
    \plot 4.7 5.3  10.3 -.3 /
    \plot 6.7 5.3  11.3 .7 /
    \setsolid
    \multiput {\hru \hrd \hdu \hud \huu \hdd } at  4 4  4 2  4 0  7 3  7 1  /
    \put {\hdd \hrd \hud } at 7 -1
    \put {\hru \hdu \hud } at 1 3
    \put {\hrd \hud \hdu } at 1 1 
    \put {\huu \hru } at 7 5
    \put {\hdd \huu \hru } at 10 2
    \put {\huu \hdd \hrd } at 10 0
     \put {$s_1$} at 12.3 1
     \put {$s_2$} at 8 5.3
     \put {$s_3$} at 8 -1.5
     \put {\sf *} at 4 1.5
     \put {\scriptsize\sf 1} at 5 1.8
     \put {\scriptsize\sf 2} at 6.2 1.3
     \put {\scriptsize\sf 3} at 7 1.5
     \put {\scriptsize\sf 6} at 10 1.5
     %
    \endpicture
    $}
    \caption{Left multiplication by affine permutations}
    \label{figure-type}
\end{figure}

\section{The fundamental hexagon}

It turns out that the Coxeter group $\widetilde S_3$ has a normal subgroup $\widetilde T$ of index 6
given by translations.  The factor group $\widetilde S_3/\widetilde T$ is isomorphic
to the symmetric group $S_3$.
We call the fundamental domain with respect to the shift by a translation in $\widetilde T$
the fundamental hexagon.  In the next section, we will discuss the role of this fundamental hexagon
in music.

\smallskip
It would be desirable to have a ``comma subgroup'' in $\widetilde S_3$ to provide a link to the
group generated by the PLR-moves on the finite Tonnetz, but the author was not able to detect a
suitable normal subgroup in $\widetilde S_3$.
However, there is a related group, the point reflection group $P$,
which does have such a ``comma subgroup'', as we will see in Section~\ref{section-point}.

\smallskip
Note that left multiplication by $s_2s_3s_2=s_3s_2s_3$ is the reflection on the Tonnetz
on the line one unit above the $s_1$-axis.  Hence $t_1=(s_2s_3s_2)s_1$ is the upwards translation
by 2 units.
Similarly, $t_2=(s_3s_1s_3)s_2$ and $t_3=(s_1s_2s_1)s_3$ are translations by 2 units
towards the lower right and the lower left, respectively, as indicated in Figure~\ref{figure-hexagons}.

\begin{prop}
  The group of translations $\widetilde T=\langle t_1,t_2,t_3\rangle=\langle t_1,t_2\rangle$
  is a free abelian group of rank 2.
  Moreover, $\widetilde T$ is a normal subgroup of $\widetilde S_3$ of index 6,
  the factor group $\widetilde S_3/\widetilde T$ is isomorphic to the symmetric group $S_3$.
\end{prop}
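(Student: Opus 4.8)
The plan is to route everything through the natural homomorphism $\pi\colon\widetilde S_3\to S_3$ that sends an affine permutation $f$ to the permutation it induces on the three residue classes modulo $3$. Because $f(n+3)=f(n)+3$, the rule $\pi(f)\colon(n\bmod 3)\mapsto(f(n)\bmod 3)$ is well defined, and it is a homomorphism since composition of affine permutations induces composition of the underlying residue permutations. Reading off the windows $s_1=[0,-1,1]$, $s_2=[-1,1,0]$, $s_3=[-2,0,2]$, one checks that $\pi(s_1)$, $\pi(s_2)$, $\pi(s_3)$ are the three transpositions of $\{-1,0,1\}$ modulo $3$, so $\pi$ is onto $S_3$.

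The structural heart of the argument is that $\ker\pi$ is exactly the group of translations. First I would observe that $f\in\ker\pi$ means $f(n)\equiv n\pmod 3$ for all $n$; together with $f(n+3)=f(n)+3$ this forces $f(n)=n+3v_{\bar n}$ for a vector $v=(v_{-1},v_0,v_1)\in\mathbb Z^3$, and the condition $f(-1)+f(0)+f(1)=0$ gives $v_{-1}+v_0+v_1=0$. Conversely each such map lies in $\widetilde S_3\cap\ker\pi$. A one-line computation with the composition rule shows that two such translations compose by adding their vectors, so $\ker\pi$ is isomorphic, as a group, to the rank-two lattice $L=\{v\in\mathbb Z^3:v_{-1}+v_0+v_1=0\}$; in particular it is free abelian of rank $2$.

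It then remains to identify $\ker\pi$ with the subgroup generated by the given $t_i$. Evaluating $t_1=(s_2s_3s_2)s_1$, $t_2=(s_3s_1s_3)s_2$, $t_3=(s_1s_2s_1)s_3$ step by step from the identity via the right-multiplication formula, I would obtain $t_1=[2,-3,1]$, $t_2=[-1,3,-2]$, $t_3=[-4,0,4]$, with translation vectors $(1,-1,0)$, $(0,1,-1)$, $(-1,0,1)$ respectively. Since $(1,-1,0)$ and $(0,1,-1)$ form a $\mathbb Z$-basis of $L$, the elements $t_1,t_2$ already generate all of $\ker\pi$; and since the three vectors sum to zero we get $t_1t_2t_3=1$, so $t_3\in\langle t_1,t_2\rangle$ and $\widetilde T=\langle t_1,t_2,t_3\rangle=\langle t_1,t_2\rangle=\ker\pi$. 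All three assertions follow at once: $\widetilde T$ is free abelian of rank $2$ (being $\ker\pi\cong L$), it is normal as a kernel, and the first isomorphism theorem with the surjectivity of $\pi$ yields $\widetilde S_3/\widetilde T\cong S_3$, so the index is $|S_3|=6$.

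The only genuine obstacle is bookkeeping: evaluating the $t_i$ in the correct left-to-right order under the right-multiplication rule, and then verifying that $t_1,t_2$ span the full lattice $L$ rather than a proper finite-index sublattice. The index coming out to exactly $6$ hinges entirely on this basis check, so that is the step I would carry out most carefully.
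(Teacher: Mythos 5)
Your proof is correct, but it takes a genuinely different route from the paper's. The paper argues geometrically: commutativity and the rank-two lattice structure come from Euclidean geometry, normality from the explicit conjugation identities $s_1t_1s_1^{-1}=t_1^{-1}$, $s_2t_1s_2^{-1}=t_3^{-1}$, $s_3t_1s_3^{-1}=t_2^{-1}$, the index~$6$ from the tiling of the plane by hexagons indexed by $\widetilde T$, and the quotient by exhibiting $S_3=\langle s_2,s_3\rangle$ as an internal complement with $S_3\cap\widetilde T=\{e\}$. You instead work entirely inside the affine-permutation model, building the residue homomorphism $\pi\colon\widetilde S_3\to S_3$, identifying $\ker\pi$ with the lattice $L=\{v\in\mathbb Z^3:v_{-1}+v_0+v_1=0\}$, and then checking by direct computation (your windows $t_1=[2,-3,1]$, $t_2=[-1,3,-2]$, $t_3=[-4,0,4]$ are right, as is the basis check for $(1,-1,0)$ and $(0,1,-1)$) that $\langle t_1,t_2,t_3\rangle=\langle t_1,t_2\rangle=\ker\pi$. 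Your approach gets normality, the index, and the quotient in one stroke from the first isomorphism theorem, and it nails down the most delicate point --- that $t_1,t_2$ span the full translation lattice rather than a finite-index sublattice --- by an explicit determinant-type check where the paper relies on the hexagon picture. What the paper's approach buys in exchange is the internal splitting $\widetilde S_3=\widetilde T\cdot S_3$ with $\langle s_2,s_3\rangle$ as a concrete complement (needed for the corollary and the fundamental hexagon), together with the geometric conjugation formulas that are reused later; if you wanted those you would need a small addendum, e.g.\ observing that $\pi$ restricts to an isomorphism on $\langle s_2,s_3\rangle$.
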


\begin{proof}
  From Euclidean geometry it is clear that $t_1t_2=t_3^{-1}=t_2t_1$, it follows that
  $\widetilde T=\langle t_1,t_2\rangle$ is an abelian group; moreover, the translations
  $t_1,t_2$ span a 2-dimensional lattice in the plane.
  The group $\widetilde T$ is a normal subgroup of $\widetilde S_3$ since
  $s_1t_1s_1^{-1}=t_1^{-1}$, $s_2t_1s_2^{-1}=t_3^{-1}$, $s_3t_1s_3^{-1}=t_2^{-1}$,
  and the index is 6 since the plane of the Tonnetz can be tiled with hexagons
  which are in one-to-one correspondence with the elements in $\widetilde T$.

  \def\hexa{\setsolid
    \plot -2 0  -1 1  1 1  2 0  1 -1  -1 -1  -2 0 /
    \setdots<2pt>
    \plot -2 0  2 0 /
    \plot -1 1  1 -1 /
    \plot -1 -1  1 1 /  }
  
  \begin{figure}[ht]
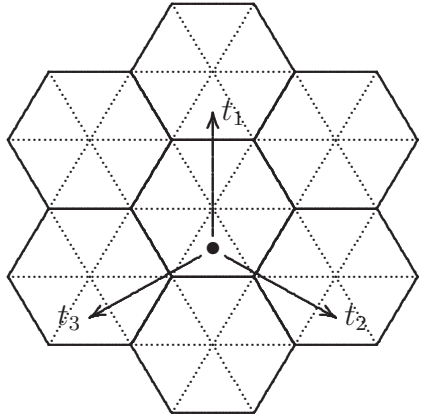

    \centerline{$    \beginpicture
      \setcoordinatesystem units <3ex,5ex>
      \put {} at -4 -3 
      \put {} at 4 3 
      \setsolid
      \put{$\bullet$} at 0 -.6
      \arr{0 -.4}{0 1.4}
      \arr{.3 -.7}{3 -1.6}
      \arr{-.3 -.7}{-3 -1.6}
      \put{$t_1$} at .5 1.4
      \put{$t_2$} at 3.5 -1.6
      \put{$t_3$} at -3.5 -1.6
      \multiput{\hexa} at 0 0  3 1  0 2  -3 1  -3 -1  0 -2  3 -1 /
      \endpicture
      $}
    \caption{Translations of hexagons}
    \label{figure-hexagons}
  \end{figure}
  For the last claim, consider $S_3=\langle s_2, s_3\rangle$ as a subgroup of $\widetilde S_3$.
  Since $S_3\cap \widetilde T=\{e\}$, the composition
  $$S_3\to \widetilde S_3\to \widetilde S_3/\widetilde T$$
  is a one-to-one map, hence a group isomorphism.\qed
\end{proof}

\begin{cor}
  As a group, $\widetilde S_3=\widetilde T\cdot S_3$.\qed
\end{cor}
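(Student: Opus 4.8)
The plan is to read off this factorization directly from the preceding proposition, where it was established that $\widetilde T$ is a normal subgroup of $\widetilde S_3$ of index $6$ and that the composition $S_3\to\widetilde S_3\to\widetilde S_3/\widetilde T$ is a group isomorphism. First I would check that the product set $\widetilde T\cdot S_3$ is actually a subgroup of $\widetilde S_3$. This is where normality of $\widetilde T$ enters: for $t,t'\in\widetilde T$ and $\sigma,\sigma'\in S_3$ one computes $(t\sigma)(t'\sigma')=t(\sigma t'\sigma^{-1})\sigma\sigma'$, and $\sigma t'\sigma^{-1}\in\widetilde T$ by normality, so the product again lies in $\widetilde T\cdot S_3$; closure under inverses is seen the same way, writing $(t\sigma)^{-1}=(\sigma^{-1}t^{-1}\sigma)\sigma^{-1}$.

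Next I would prove the inclusion $\widetilde S_3\subseteq\widetilde T\cdot S_3$, which is the substantive half. Given an arbitrary $g\in\widetilde S_3$, consider its coset $g\widetilde T\in\widetilde S_3/\widetilde T$. Since the composite map of the proposition is an isomorphism, and in particular onto, there is some $\sigma\in S_3$ with $\sigma\widetilde T=g\widetilde T$. Then $g\sigma^{-1}\in\widetilde T$, say $g\sigma^{-1}=t$, so that $g=t\sigma\in\widetilde T\cdot S_3$. As $g$ was arbitrary this gives $\widetilde S_3\subseteq\widetilde T\cdot S_3$, and the reverse inclusion is immediate since both $\widetilde T$ and $S_3$ are contained in $\widetilde S_3$. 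Hence $\widetilde S_3=\widetilde T\cdot S_3$.

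I do not expect any genuine obstacle: the corollary is exactly the internal-product statement that accompanies the semidirect-product structure, and all the real work has already been done in showing that $\widetilde T$ is normal of index $6$ and that $S_3$ maps isomorphically onto the quotient. The one point I would state explicitly is that $\widetilde T\cap S_3=\{e\}$, as noted in the proof of the proposition; combined with the index count this makes the representation $g=t\sigma$ unique, so the equality $\widetilde S_3=\widetilde T\cdot S_3$ is in fact a semidirect decomposition $\widetilde S_3=\widetilde T\rtimes S_3$. The corollary as stated claims only the product equality, which is what the argument above delivers.
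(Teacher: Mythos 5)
Your proof is correct and follows exactly the route the paper intends: the corollary is stated with no written proof because it is the standard consequence of the preceding proposition (normality of $\widetilde T$ and the isomorphism $S_3\cong\widetilde S_3/\widetilde T$), and your argument simply fills in those routine details. Nothing differs in substance from what the paper leaves implicit.
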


We call the region in the Tonnetz corresponding to the subgroup $S_3=\langle s_2,s_3\rangle$
the \boldit{fundamental hexagon} (see Proposition~\ref{prop-correspondence}).
The left cosets $tS_3$ for $t\in\widetilde T$ form the tiling pictured in Figure~\ref{figure-hexagons}.
Right multiplication by an element in $S_3$ yields a permutation of the triangles within
each hexagon, while left multiplication by an element of $\widetilde T$ is a parallel shift
which preserves the hexagonal pattern.

\smallskip
We conclude this section by briefly lising the four types of
elements in $\widetilde S_3$ in terms of their action on the Tonnetz given by
left multiplication.

\begin{itemize}
\item The elements of order 2 are reflections on a line parallel to one of the axes.
  In particular, the reflection on the $n$-th line parallel to $s_i$ is given by $t_i^ns_i$
  ($n\in\mathbb Z, i=1,2,3$).
\item Products of two reflections on lines which are not parallel
  are rotations by $\pm120^\circ$ about a vertex in the Tonnetz.
  Those are the elements of order 3 in $\widetilde S_3$.
\item Products of two reflections on parallel lines are translations,
  they form the normal subgroup $\widetilde T$ considered above.
\item The remaining elements are odd, they act as glide reflections on the Tonnetz;
  all have infinite order.
\end{itemize}

\section{The fundamental hexagon in music}

The hexagon encapsulates fundamental concepts in music theory, and leads to some, perhaps weird,
sequences of chords.

\smallskip
Each hexagon in the Tonnetz consists of six triangles which represent major and minor chords
which have one note in common, see Figure~\ref{figure-chords-in-hexagon}.

  \begin{figure}[ht]
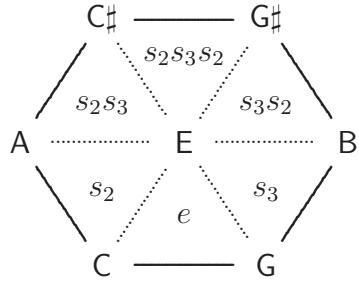

    \centerline{$    \beginpicture
      \setcoordinatesystem units <6ex,9ex>
      \put {} at -2 -1
      \put {} at 2 1
      \setsolid
      \put{\C\s} at -1 1
      \put{\G\s} at 1 1
      \put{\A} at -2 0
      \put{\E} at 0 0
      \put{\B} at 2 0
      \put{\C} at -1 -1
      \put{\G} at 1 -1
      \put{$s_2$} at -1 -.4
      \put{$s_3$} at 1 -.4
      \put{$s_3s_2$} at 1 .3
      \put{$s_2s_3$} at -1 .3
      \put{$s_2s_3s_2$} at 0 .7
      \put{$e$} at 0 -.6
      \plot -.6 1  .6 1 /
      \plot -1.8 .2  -1.2 .8 /
      \plot -1.8 -.2  -1.2 -.8 /
      \plot -.6 -1  .6 -1 /
      \plot 1.8 .2  1.2 .8 /
      \plot 1.8 -.2  1.2 -.8 /
      \setdots<2pt>
      \plot -1.6 0  -.4 0 /
      \plot .4 0  1.6 0 /
      \plot -.8 -.8  -.2 -.2 /
      \plot .2 .2  .8 .8 /
      \plot -.8 .8  -.2 .2 /
      \plot .2 -.2  .8 -.8 /
      \endpicture
      $}
    \caption{Six major and minor chords}
    \label{figure-chords-in-hexagon}
  \end{figure}

  \smallskip
  Consider the \E-hexagon from Figure~\ref{figure-chords-in-hexagon}.
  The three reflections $s_2, s_3, s_2s_3s_2$ in $S_3$ describe the
  PLR-moves locally.

  \smallskip
  Reflection on the $s_3$-axis is the leading tone exchange:
  $$\LL: \quad \text{\C-\E-\G} \longleftrightarrow \text{\E-\G-\B},
  \quad \text{\A-\C\s-\E} \longleftrightarrow \text{\C\s-\E-\G\s}$$
  Reflection on the $s_2$-axis yields the relative major or minor:
  $$\RR: \quad \text{\C-\E-\G}\longleftrightarrow \text{\A-\C-\E},
  \quad \text{\E-\G\s-\B}\longleftrightarrow\text{\C\s-\E-\G\s}$$
  and reflection on the $s_2s_3s_2$-axis the parallel major or minor:
  $$\PP:  \quad \text{\A-\C\s-\E} \longleftrightarrow \text{\A-\C-\E},
  \quad\text{\E-\G\s-\B}\longleftrightarrow\text{\E-\G-\B}.$$

  \smallskip
  The elements $s_2s_3$ and $s_3s_2$ in $S_3$ have order 3.  Iterated multiplication by $s_3s_2$
  gives rise to a 3-cycle of major chords within the \E-hexagon:
  $$\text{\C-\E-\G}\longrightarrow\text{\E-\G\s-\B}\longrightarrow\text{\A-\C\s-\E}\longrightarrow\text{\C-\E-\G}$$
  and a 3-cycle of minor chords:
  $$\text{\C\s-\E-\G\s}\longrightarrow\text{\A-\C-\E}\longrightarrow\text{\E-\G-\B}\longrightarrow\text{\C\s-\E-\G\s}.$$

  The three steps in the cycle:  The move to the upper left, then the horizontal move to the right and the move to the lower left,
  mark three stripes in the Tonnetz.
  \begin{itemize}
  \item The horizontal stripe given by a chord contains all possibly higher subdominant
    and dominant chords.
  \item The stripe in NE-SW direction pictures the (infinite) hexatonic
    system to which the chord belongs,
    see \cite[Part~III]{cohn96}.
  \item The stripe in NW-SE direction represents the
    (infinite) octatonic system for the given chord, see \cite{cohn97}.
  \end{itemize}
  We notice that while the rotations in $S_3$ mark the directions of the three stripes, the translations in $\widetilde T$
  can be used to move between parallel systems.

  \smallskip\sloppypar
  We would like to point out that the three opening chords of Ludwig van Beethoven's {\it Moonlight Sonata}
  take place within the \E-hexagon.  The \C\s-minor chord leads to the \C\s-minor sept chord \C\s-\E-\G\s-\B\ 
  (which contains the relative \E-major chord), then to the (subdominant) \A-major chord.
  The neighboring \A-hexagon captures the transition from the \A-major chord to the following
  subdominant \D-major chord...

  \smallskip
  There are more, perhaps even weirder, chord sequences outside of
  the central hexagon.
  For examle, the rotation by $s_3s_2$ permutes the major chords which have one edge in common with the
  \E-hexagon:
  $$\text{\G-\B-\D}\longrightarrow\text{\C\s-\E\s-\G\s}\longrightarrow\text{\F-\A-\C}\longrightarrow\text{\G-\B-\D}$$
  and similarly the minor chords:
  $$\text{\C-\E\f-\G}\longrightarrow\text{\G\s-\B-\D\s}\longrightarrow\text{\F\s-\A-\C\s}\longrightarrow\text{\C-\E\f-\G}.$$

  \smallskip
  Another type of chord sequence is obtained from translations of the hexagons in the Tonnetz.
  Consider the tiling pictured in Figure~\ref{figure-tiling-by-hexagons}.
  
    \def\hexaempty{\setsolid
    \plot -2 0  -1 1  1 1  2 0  1 -1  -1 -1  -2 0 /
    \setdots<2pt>
    \plot -2 0 -.6 0 /
    \plot .6 0  2 0 /
    \plot -1 1  -.3 .3 /
    \plot .3 -.3  1 -1 /
    \plot -1 -1  -.3 -.3 /
    \plot .3 .3  1 1 /  }
  
  \begin{figure}[ht]
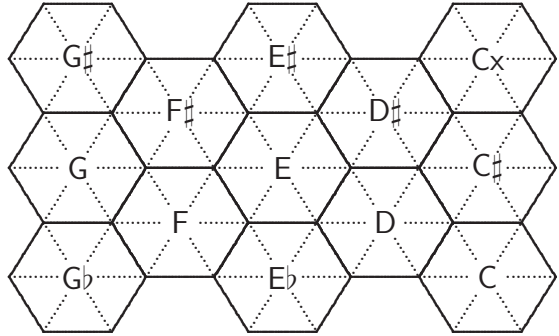

    \centerline{$    \beginpicture
      \setcoordinatesystem units <2.5ex,4ex>
      \put {} at -4 -3 
      \put {} at 4 3 
      \multiput{\hexaempty} at 0 0  3 1  0 2  -3 1  -3 -1  0 -2  3 -1  6 2  6 0  6 -2  -6 2  -6 0  -6 -2 /
      \put{\E} at 0 0
      \put{\E\s} at 0 2
      \put{\E\f} at 0 -2
      \put{\F} at -3 -1
      \put{\F\s} at -3 1
      \put{\D} at 3 -1
      \put{\D\s} at 3 1
      \put{\G} at -6 0
      \put{\G\s} at -6 2
      \put{\G\f} at -6 -2
      \put{\C\s} at 6 0
      \put{\C{\sf x}} at 6 2
      \put{\C} at 6 -2
      \endpicture
      $}
    \caption{The tiling by hexagons}
    \label{figure-tiling-by-hexagons}
  \end{figure}

  \smallskip
  The translations which define the tiling 
  satisfy the identity $t_3t_2t_1=e$.  Applying successively $t_1, t_2t_1, t_3t_2t_1$ to \C-\E-\G\
  yields the sequence of major chords
  $$\text{\C-\E-\G}\longrightarrow \text{\C\s-\E\s-\G\s}\longrightarrow\text{\B-\D\s-\F\s}\longrightarrow\text{\C-\E-\G}$$
  (in the \E-, \E\s-, and \D\s-hexagons), and the sequence of minor chords
  $$\text{\A-\C-\E}\longrightarrow \text{\A\s-\C\s-\E\s}\longrightarrow\text{\G\s-\B-\D\s}\longrightarrow\text{\A-\C-\E}$$
  (also in the \E-, \E\s-, and \D\s-hexagons).

  \smallskip
  A substantial collection of Tonnetz models can be found in \cite{cohn-audacious}.
  An interesting musical case for the application of $\widetilde S_3$
  are the ``pitch retention loops'' in which the chords in the hexagon
  occur in cyclical order, see in particular \cite[Figure~6.3]{cohn-audacious}.
  The Coxeter group $\widetilde S_3$ does not contain any elements of order 6,
  so the rotation by $60^\circ$ may be difficult to explain.
  But the alteration of $s_2$ and $s_3$ is still a more effective
  description than the succession of $L$, $P$ and $R$.

  \smallskip
  In the above, we have identified the infinite triadic Tonnetz
  with the Coxeter group $\widetilde S_3$ and studied the left
  action of the group $\widetilde S_3$ on itself.
  Considering the right action, note that alternating right
  multiplication by $s_3$ and $s_2$ generates the triads in each
  of the hexagons with base triad in $\widetilde T$,
  they are pictured in Figure~\ref{figure-tiling-by-hexagons}.
  The remaining hexagons in the Tonnetz have their base triad
  in either $s_3s_2\widetilde T$ or $s_2s_3\widetilde T$,
  there the triads in the cycle are generated by alternating
  right multiplication by $s_1$ and $s_3$, or by $s_2$ and $s_1$,
  respectively.

\section{Schritte and Wechsel, revisited}

In \cite{rie}, Hugo Riemann presents two kinds of operations
on the Tonnetz, Schritte and Wechsel.  Under a \boldit{Schritt,} each note in a major triad moves up or down
a certain number of scale degrees, while the notes in a minor triad move in the opposite direction.
For example, using neo-Riemannian terminology, the Quintschritt is given by the
\RR\LL-move, the Terzschritt by the \PP\LL-move.
Under a \boldit{Wechsel,} major and minor triads correspond to each other,
for example the Seitenwechsel $w$ yields the parallel triad given by the \PP-move.

\begin{figure}[ht]
  \centerline{$    \beginpicture
    \setcoordinatesystem units <3ex,5ex>
    \put {} at -.3 -.3
      \put {} at 6.3 3.3 
      \setdots<2pt>
      \plot .7 0  5.3 0 /
      \plot -.3 1  6.3 1 /
      \plot .7 2  5.3 2 /
      \plot 1.7 3  4.3 3 /
      \plot -.2 .8  2.2 3.2 /
      \plot .8 -.2  4.2 3.2 /
      \plot 2.8 -.2  5.2 2.2 /
      \plot 4.8 -.2  6.2 1.2 /
      \plot -.2 1.2  1.2 -.2 /
      \plot .8 2.2  3.2 -.2 /
      \plot 1.8 3.2  5.2 -.2 /
      \plot 3.8 3.2  6.2 .8 /
      \setsolid
      \arr {2.3 2.3} {3.7 2.3}
      \arr {1.3 1.3} {2.7 1.3}
      \arr {3.3 1.3} {4.7 1.3}
      \arr {2.3 .3} {3.7 .3}
      \arr {2.7 .7} {1.3 .7}
      \arr {4.7 .7} {3.3 .7}
      \arr {3.7 1.7} {2.3 1.7}
      \multiput{$\scriptstyle\bullet$} at 2.3 2.3  3.3 1.3  1.3 1.3  2.3 .3  2.7 .7  4.7 .7 3.7 1.7 /
      \endpicture
      \qquad
       \beginpicture
    \setcoordinatesystem units <3ex,5ex>
      \put {} at -.3 -.3
      \put {} at 6.3 3.3 
      \setdots<2pt>
      \plot .7 0  5.3 0 /
      \plot -.3 1  6.3 1 /
      \plot .7 2  5.3 2 /
      \plot 1.7 3  4.3 3 /
      \plot -.2 .8  2.2 3.2 /
      \plot .8 -.2  4.2 3.2 /
      \plot 2.8 -.2  5.2 2.2 /
      \plot 4.8 -.2  6.2 1.2 /
      \plot -.2 1.2  1.2 -.2 /
      \plot .8 2.2  3.2 -.2 /
      \plot 1.8 3.2  5.2 -.2 /
      \plot 3.8 3.2  6.2 .8 /
      \setsolid
      \arr {1 1.5} {1.7 2.2}
      \arr {3 1.5} {3.7 2.2}
      \arr {2 .5} {2.7 1.2}
      \arr {4 .5} {4.7 1.2}
      \arr {3 2.5} {2.3 1.8}
      \arr {2 1.5} {1.3 .8}
      \arr {4 1.5} {3.3 .8}
      \multiput{$\scriptstyle\bullet$} at 1 1.5  3 1.5  2 .5  4 .5  3 2.5  2 1.5  4 1.5 /
      \endpicture\qquad
       \beginpicture
    \setcoordinatesystem units <3ex,5ex>
      \put {} at -.3 -.3
      \put {} at 6.3 3.3 
      \setdots<2pt>
      \plot .7 0  5.3 0 /
      \plot -.3 1  6.3 1 /
      \plot .7 2  5.3 2 /
      \plot 1.7 3  4.3 3 /
      \plot -.2 .8  2.2 3.2 /
      \plot .8 -.2  4.2 3.2 /
      \plot 2.8 -.2  5.2 2.2 /
      \plot 4.8 -.2  6.2 1.2 /
      \plot -.2 1.2  1.2 -.2 /
      \plot .8 2.2  3.2 -.2 /
      \plot 1.8 3.2  5.2 -.2 /
      \plot 3.8 3.2  6.2 .8 /
      \setsolid
      \arr {1 1} {1 .5}
      \arr {1 1} {1 1.5}
      \arr {3 1} {3 .5}
      \arr {3 1} {3 1.5}
      \arr {5 1} {5 .5}
      \arr {5 1} {5 1.5}
      \arr {2 2} {2 1.5}
      \arr {2 2} {2 2.5}
      \arr {4 2} {4 1.5}
      \arr {4 2} {4 2.5}
      \endpicture
    $}
    \caption{Quintschritt \RR\LL, Terzschritt \PP\LL, and Seitenwechsel \PP}
    \label{figure-schritt-wechsel}
\end{figure}

\smallskip
For a detailed description of Riemann's system of Schritte and Wechsel we refer to \cite{klum}
which also discusses the composition of Schritte and Wechsel.
The Schritte group $T$ is generated by the Quintschritt \RR\LL\ and the Terzschritt \PP\LL,
it is isomorphic to the additive group $\mathbb Z\times\mathbb Z$.
Note that under a Schritt,
major and minor triads move in opposite directions, see Figure~\ref{figure-schritt-wechsel}.

\smallskip
Each Wechsel can be obtained as a composition of a Schritt with the Seitenwechsel $w$,
hence the product of a Wechsel with itself is the identity operation on the Tonnetz.
More generally, if $t,t'\in T$ are Schritte, 
then then the product $(t'w)\cdot(tw)=t't^{-1}$ of two Wechsel is the
composition of a Schritt $t'$ with the opposite of the Schritt $t$ --- hence the product of two Wechsel is always
a Schritt.
The map $\varphi:T\to T, t\mapsto t^{-1}$ given by conjugation by $w$ is an
automorphism of order two (since $T$ is an abelian group), and the group $R$ of Schritte and Wechsel as described in
\cite[Appendix III]{klum} is the semi-direct product $\mathbb Z_2\ltimes_\varphi T$.

\smallskip
We compare the Schritt-Wechsel group $R$ and the Coxeter group $\widetilde S_3$.

\smallskip
As for the Coxeter group $\widetilde S_3$,
the elements in $R$ are in one-to-one correspondence with the triangles in the infinite Tonnetz.
Using this identification, the Schritt-Wechsel group $R$ acts on itself via left multiplication
and via right multiplication; each action is simply transitive.

\smallskip
There is a normal subgroup $N$ in $\widetilde S_3$ of index 2, it is given by all sequences of
reflections of even length.  Under the identification of $\widetilde S_3$ with the
triangles in the Tonnetz, the subgroup $N$ corresponds to the triangles of shape $\Delta$,
which are the triangles in even distance from $(*)$ (see Corollary~\ref{corollary}).
As a subset of $\widetilde S_3$, the group $N$ consists of the rotations and the translations.

\smallskip
For a reflection in $\widetilde S_3$, say $s_1$, 
the conjugation by $s_1$ defines a group automorphism $\psi:N\to N, n\mapsto s_1ns_1^{-1}$.
The group $\widetilde S_3$ is isomorphic to the semi-direct product
$\mathbb Z_2\ltimes_\psi N$.

\smallskip
Despite these structural similarities, the groups $R$, $\widetilde S_3$ are not isomorphic.

\section{The point reflection group}
\label{section-point}

To shed light on the interplay between Riemannian theory on the infinite Tonnetz and
neo-Riemannian theory on the finite Tonnetz, we exhibit a third group (besides $\widetilde S_3$ and $R$),
the point reflection group $P$.  It has three main features:
\begin{itemize}
\item[$\bullet$] The group $P$ is generated by three reflections (that is, elements of order 2).
\item[$\bullet$] The group $P$ is naturally isomorphic to the opposite group of $R$.
\item[$\bullet$] There is a normal subgroup $K$ in $P$ with factor the dihedral group $D_{12}$.
\end{itemize}

Recall that $D_{12}$ is the group of 24 elements, generated by the PLR-moves on the
finite Tonnetz, see \cite[Chapter~5]{cfs}.

\smallskip
The \boldit{point reflection group} $P$ is the subgroup of the group of Euclidean
plane isometries
generated by the $180^\circ$ rotations $\pi_1,\pi_2,\pi_3$, where each $\pi_i$ is the point reflection
about the midpoint of the edge of (*) on the $s_i$-axis, see Figure~\ref{figure-point-reflections}.

\begin{figure}[ht]
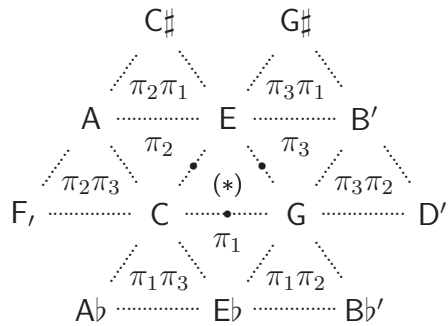

    \centerline{$    \beginpicture
    \setcoordinatesystem units <5ex,7ex>
    \put {\A\f} at 1 1 
    \put {\F\c} at 0 2
    \put {\E\f} at 3 1 
    \put {\C} at   2 2
    \put {\A} at   1 3 
    \put {\B\f\p} at 5 1 
    \put {\G} at 4 2 
    \put {\E} at 3 3
    \put {\C\s} at 2 4 
    \put {\D\p} at 6 2 
    \put {\B\p} at   5 3
    \put {\G\s} at   4 4 
    \put {\scriptsize $(*)$} at 3 2.3
    \setdots<2pt>
    \plot 0.4 2  1.6 2 /
    \plot 4.4 2  5.6 2 /
    \plot 1.4 1  2.6 1 /
    \plot 3.4 1  4.6 1 /
    \plot 1.4 3  2.6 3 /
     \plot 3.4 3  4.6 3 /
     \plot 2.4 2  3.6 2 /
     \plot 2.3 3.7  2.7 3.3 /
     \plot 4.3 1.7  4.7 1.3 /
     \plot 4.3 3.7  4.7 3.3 /
     \plot 5.3 2.7  5.7 2.3 /
     \plot 1.3 2.7  1.7  2.3 /
     \plot 2.3 1.7  2.7 1.3 /
     \plot 3.3 2.7  3.7 2.3 /
     \plot 0.3 2.3  0.7 2.7 /
     \plot 1.3 3.3  1.7 3.7 /
     \plot 1.3 1.3  1.7 1.7 /
     \plot 3.3 3.3  3.7 3.7 /
     \plot 2.3 2.3  2.7 2.7 /
     \plot 3.3 1.3  3.7 1.7 /
     \plot 4.3 2.3  4.7 2.7 /
     \put {$\ssize\bullet$} at 3 2 
     \put {$\ssize\bullet$} at 2.5 2.5
     \put {$\ssize\bullet$} at 3.5 2.5
     \put {\scriptsize $(*)$} at 3 2.3
     \put {$\pi_2$} at 2 2.7
     \put {$\pi_3$} at 4 2.7
     \put {$\pi_1$} at 3 1.7
     \put {$\pi_2\pi_3$} at 1 2.3
     \put {$\pi_2\pi_1$} at 2 3.3
     \put {$\pi_1\pi_3$} at 2 1.3
     \put {$\pi_3\pi_1$} at 4 3.3
     \put {$\pi_1\pi_2$} at 4 1.3
     \put {$\pi_3\pi_2$} at 5 2.3
    \endpicture
    $}
    \caption{Point reflections on the Tonnetz}
    \label{figure-point-reflections}
\end{figure}

\smallskip
The action on the Tonnetz given by left multiplication by point reflections is pictured in
Figure~\ref{figure-Paction}.  By comparison, Figure~\ref{figure-type} in Section~\ref{section-coxeter}
shows the action by affine permutations.

\def\hleft{\put {} at -2 -1
  \put {} at 2 1
  \plot -.3 -.3  -.45 -.15  .45 -.15  / }
\def\hright{\put {} at -2 -1
  \put {} at 2 1
  \plot -.45 .15  .45 .15  .3 .3 / }

\begin{figure}[ht]
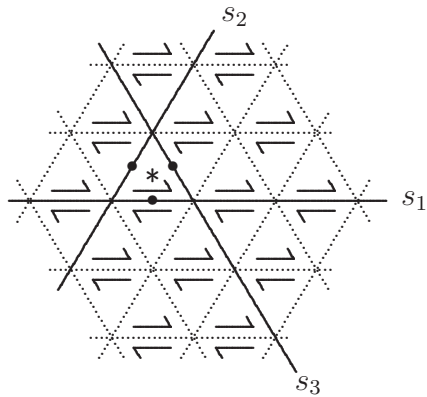

  \centerline{$    \beginpicture
    \setcoordinatesystem units <3ex,5ex>
      \put {} at -.3 -.5
      \put {} at 9 4.3
    \plot -.5 2  8.7 2 /
    \plot .7 .7   4.5 4.5 /
    \plot 1.7 4.3  6.5 -.5 /
    \setdots<2pt>
    \plot .5 3  7.4 3 /
    \plot 1.5 4  6.4 4 /
    \plot .5 1  7.4 1 /
    \plot 1.5 0  6.4 0 /
    \plot -.3 1.7   2.3 4.3 /
    \plot 1.7 -.3   6.3 4.3 /
    \plot 3.7 -.3   7.3 3.3 /
    \plot 5.7 -.3   8.3 2.3 /
    \plot -.3 2.3  2.3 -.3 /
    \plot .7 3.3  4.3 -.3 /
    \plot 3.7 4.3  7.3 .7 /
    \plot 5.7 4.3  8.3 1.7 /
    %
    \setsolid
    \multiput {\hleft \hright } at 1 2  2 1  2 3  3 0  3 2  3 4  4 1  4 3
                5 0  5 2  5 4  6 1  6 3  7 2   /
     \put {$s_1$} at 9.4 2
     \put {$s_2$} at 5 4.7
     \put {$s_3$} at 6.8 -.7
     \put {\sf *} at 3 2.3
     \multiput {\scriptsize$\bullet$} at  3 2  2.5 2.5  3.5 2.5 / 
    \endpicture
    $}
    \caption{Left multiplication by point reflections}
    \label{figure-Paction}
\end{figure}

\smallskip
The product of two point reflections is the translation by twice the difference between the centers,
so for example, $\pi_3\pi_2$ is the shift by one unit to the right in in parallel to the $s_1$-axis.
Hence the collection of all products of an even number of point reflections
forms the group of translations $T$.

\smallskip
Each remaining element in $P$ is a product of a point reflection and a translation,
hence a point reflection itself, about a vertex or about the midpoint of a triangle edge in the
Tonnetz.  Each such element has order 2.

\begin{prop}\label{prop-RPiso}
  The point reflection group $P$ is in a natural way isomorphic to
  the opposite group of Riemann's Schritte-Wechsel group $R$.
\end{prop}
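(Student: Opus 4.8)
The plan is to realise $P$ and $R$ as two groups of permutations of one and the same set, the set $\mathcal T$ of triangles of the Tonnetz, whose actions are simply transitive and commute. Granting this, the statement becomes a special case of a general principle: the centraliser of a simply transitive action is canonically its opposite group. Concretely, identify $\mathcal T$ with $R$ as a left $R$-set by sending the base triangle to the identity, so that $R$ acts by left multiplication; then the permutations of $\mathcal T$ commuting with all of $R$ are exactly the right translations $\rho_r\colon\Delta\mapsto\Delta r$, and $r\mapsto\rho_r$ is an anti-isomorphism out of $R$, that is, an isomorphism $R^{\mathrm{op}}\to C$ onto the centraliser $C\subseteq\mathrm{Sym}(\mathcal T)$. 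This $C$ is again simply transitive.

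First I would record the two actions on $\mathcal T$. By the previous section the elements of $R$ are identified with the triangles so that $R$ acts simply transitively, while $P$ acts by Euclidean isometries; the $P$-action is transitive, since the point reflections already reach every neighbouring triangle and their even products realise every translation. It then suffices to prove the key step, that every element of $P$ commutes, as a permutation of $\mathcal T$, with every element of $R$. As commuting survives composition, I would verify this on generators. Label the upward (major) and downward (minor) triangle of the unit cell with lower-left corner $y\in\mathbb Z^2$ by $(y,+)$ and $(y,-)$, giving $\mathcal T\cong\mathbb Z^2\times\{+,-\}$. In these coordinates a point reflection acts as $(y,\pm)\mapsto(k-y,\mp)$ for a fixed $k\in\mathbb Z^2$, a translation as $(y,\pm)\mapsto(y+v,\pm)$, a Schritt as $(y,+)\mapsto(y+u,+)$ together with $(y,-)\mapsto(y-u,-)$, and the Seitenwechsel $w$ as $(y,+)\mapsto(y+a,-)$ together with $(y,-)\mapsto(y-a,+)$, for fixed $v,u,a\in\mathbb Z^2$. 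Substituting these formulas into the four cross products --- a point reflection or a translation against a Schritt or the Seitenwechsel --- and comparing the two orders verifies the claim directly.

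With the key step established the argument closes at once: it gives $P\subseteq C$, and since $P$ is transitive while $C$ is simply transitive, any $c\in C$ agrees on the base triangle with some $p\in P$ and must therefore equal it; hence $P=C\cong R^{\mathrm{op}}$. The isomorphism is the natural one $r\mapsto\rho_r$, and this also explains why it is the \emph{opposite} group that appears: the geometric group $P$ is precisely the right regular action of $R$, dual to the left action through which $R$ is identified with the Tonnetz.

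The main obstacle is the commuting property, and its difficulty is conceptual rather than computational. A Schritt and a Wechsel are not rigid motions of the plane --- a Wechsel sends each triad to its parallel and so inverts about a different axis for each chord --- whereas the elements of $P$ are honest isometries, so their commuting is invisible geometrically and only emerges from the labelled formulas. The structural reason it holds is visible there: a Schritt and the Seitenwechsel move the major and minor coordinates by opposite amounts, and this dualistic sign is exactly cancelled by the mode-swap $\pm\mapsto\mp$ carried by every point reflection.
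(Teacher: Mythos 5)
Your argument is correct, and it reaches the conclusion by a slightly different route than the paper. The paper identifies the Tonnetz with $P$ itself (via the simply transitive left $P$-action recorded in Figure~\ref{figure-Paction}) and then checks on generators that \emph{right} multiplication by $\pi_3\pi_2$, $\pi_3\pi_1$ and $\pi_1$ is literally the Quintschritt, the Terzschritt and the Seitenwechsel; since the right regular representation of $P$ is a faithful anti-representation and both actions are simply transitive, $P\cong R^{\rm op}$ follows at once. You instead identify the Tonnetz with $R$, verify in explicit coordinates $(y,\pm)$ that the geometric $P$-action commutes with the $R$-action, and invoke the centraliser principle together with transitivity of $P$ to force $P=C\cong R^{\rm op}$. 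These are two faces of the same fact --- the right regular action \emph{is} the centraliser of the left regular action --- but the computations you do are genuinely different: commutation of generator pairs rather than a direct matching of right translations with Schritte and Wechsel. What your version buys is self-containedness (nothing is read off a figure) and a clean conceptual explanation of why the dualistic sign of the Schritte is compatible with honest isometries, namely that the mode swap carried by every point reflection cancels it. What it costs is the coordinate bookkeeping and one small point you should make explicit: to pass from the equality $P=C$ of subgroups of $\mathrm{Sym}(\mathcal T)$ to an isomorphism of abstract groups you need the $P$-action on the triangles to be faithful, which holds because an isometry fixing every triangle fixes all their barycenters and is therefore the identity.
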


\begin{proof}
  Identify the elements of $P$ with the triads in the Tonnetz, as indicated in
  Figure~\ref{figure-Paction}.  Then right multiplication by $\pi_3\pi_2$ is the
  Quintschritt \RR\LL, by $\pi_3\pi_1$ the Terzschritt \PP\LL, and by $\pi_1$ the Seitenwechsel
  \PP, see Figure~\ref{figure-point-reflections}.
  Quintschritt, Terzschritt and Seitenwechsel generate the group $R$,
  and the elements $\pi_3\pi_2$, $\pi_3\pi_1$ and $\pi_1$ generate $P$.
  Hence the left action of $R$ on the Tonnetz coincides with the right action
  of $P$.
  Both actions are simply transitive; it follows that the groups $R^{\rm op}$ and $P$ are isomorphic.\qed
\end{proof}

It seems to be well known that Riemann's Schritt-Wechsel group $R$ has the Comma-Schritte subgroup as a normal
subgroup such that the Schritt-Wechsel group of the finite Tonnetz, which is isomorphic to the dihedral group
$D_{12}$, is a factor.  The corresponding result for the (isomorphic) point reflection group can be obtained directly.

\begin{defin}
  The \boldit{comma subgroup} $K$ is the subgroup of the point reflection group $P$
  generated by $(\pi_3\pi_1)^3$ and $(\pi_1\pi_2)^4$.
\end{defin}

One can see that $K$ is the smallest subgroup of $P$ which contains the
``Lesser-Diesis-Schritt'' $(\pi_3\pi_1)^3$, the ``Greater-Diesis-Schritt'' $(\pi_1\pi_2)^4$,
the ``Syntonic-Comma-Schritt'' $(\pi_3\pi_2)^3\pi_1\pi_2$, and the ``Pythagorean-Comma-Schritt''
$(\pi_3\pi_2)^{12}$. 

\begin{prop}
  The comma subgroup $K$ is a normal subgroup of $P$ of index 24.
  The factor group $P/K$ is isomorphic to the dihedral group $D_{12}$.
\end{prop}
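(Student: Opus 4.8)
The plan is to push everything into the translation subgroup $T$ and finish with a short linear-algebra computation. We already know that $T$, the set of products of an even number of point reflections, is a free abelian group of rank $2$ and has index $2$ in $P$. First I would fix the basis $a=\pi_1\pi_2$, $b=\pi_2\pi_3$ of $T$, using that the three products of adjacent point reflections satisfy $\pi_1\pi_2\cdot\pi_2\pi_3\cdot\pi_3\pi_1=1$, so that $\pi_3\pi_1=(ab)^{-1}$ and $T=\langle a,b\rangle$; since the three edge-midpoints are non-collinear, $a$ and $b$ are independent and hence a genuine basis. Writing a translation $a^mb^n$ as the integer vector $(m,n)$, the two generators of $K$ become $(\pi_1\pi_2)^4=a^4=(4,0)$ and $(\pi_3\pi_1)^3=(ab)^{-3}=\pm(3,3)$. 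Thus $K$ is the sublattice of $T\cong\mathbb Z^2$ spanned by the rows of $\left(\begin{smallmatrix}4&0\\3&3\end{smallmatrix}\right)$; in particular $K\subseteq T$.

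With $K$ exhibited as a finite-index sublattice, the index is the absolute value of the determinant, $[T:K]=\bigl|\det\left(\begin{smallmatrix}4&0\\3&3\end{smallmatrix}\right)\bigr|=12$, whence $[P:K]=[P:T]\,[T:K]=2\cdot 12=24$. For normality, note that $K$ is automatically normal in the abelian group $T$, so since $T$ has index $2$ in $P$ it suffices to conjugate by one odd element, say $\pi_1$. Here I would invoke the standard fact that a half-turn inverts every translation, $\pi\,\tau\,\pi^{-1}=\tau^{-1}$; applied to $\pi_1$ (with $\pi_1^{-1}=\pi_1$) this gives $\pi_1 a^4\pi_1=a^{-4}$ and $\pi_1(ab)^{-3}\pi_1=(ab)^{3}$, both of which lie in $K$. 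Therefore $\pi_1K\pi_1=K$, and combined with normality in $T$ this yields $K\trianglelefteq P$.

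To identify the quotient, I would compute the Smith normal form of $\left(\begin{smallmatrix}4&0\\3&3\end{smallmatrix}\right)$, which is $\mathrm{diag}(1,12)$, so $T/K\cong\mathbb Z_{12}$ is cyclic of order $12$; concretely $b=\pi_2\pi_3$ has order $12$ modulo $K$ and hence generates it. Now $P/K$ has order $24$ and contains $T/K$ as an index-$2$ subgroup. Let $r$ and $s$ be the images of $b$ and $\pi_1$ in $P/K$. Then $r^{12}=1$, and $s^2=1$ because $\pi_1^2=1$ while $s\neq 1$ since $\pi_1\notin T\supseteq K$, and $s r s=r^{-1}$ by the inversion property of conjugation. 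These are exactly the defining relations of $D_{12}$, so the assignment $r,s\mapsto$ the two standard generators gives a surjection $D_{12}\to P/K$ between groups of the same order $24$, hence an isomorphism.

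The calculations are routine, and the one thing that genuinely has to come out right is that $T/K$ is \emph{cyclic} of order $12$ rather than a non-cyclic abelian group of that order (such as $\mathbb Z_2\times\mathbb Z_6$); this is precisely what forces the quotient to be dihedral. The Smith normal form computation (equivalently, the coprimality bookkeeping for the vectors $(4,0)$ and $(3,3)$) is therefore the crux of the argument. A secondary point worth stating explicitly is that $s^2=1$ in the quotient, not merely $s^2\in\langle r\rangle$: this is what rules out the dicyclic group of order $24$ and pins down $D_{12}$.
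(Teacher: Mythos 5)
Your proof is correct and follows essentially the same route as the paper's: normality via the fact that conjugation by a point reflection inverts every translation, the index via viewing $K$ as a finite-index sublattice of $T\cong\mathbb Z\times\mathbb Z$, and the identification of $P/K$ with $D_{12}$ via an order-12 element together with the involution $\pi_1$ and the inversion relation. The only cosmetic difference is your choice of basis $\{\pi_1\pi_2,\pi_2\pi_3\}$, which makes $K$ a non-diagonal sublattice and so calls for the determinant and Smith normal form computations, whereas the paper works in the basis $\{\pi_3\pi_1,\pi_1\pi_2\}$, in which $K$ is simply $3\mathbb Z\times 4\mathbb Z$ of index $12$ and the cyclic generator of $T/K$ is the ``semitone'' $h=(\pi_1\pi_2)^{-1}(\pi_3\pi_1)$, of order $\mathrm{lcm}(3,4)=12$.
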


\begin{proof}
  \sloppypar
  Using the formula from the proof of Proposition~\ref{prop-RPiso}, we obtain
  $\pi_i(\pi_3\pi_1)^3\pi_i^{-1}=(\pi_1\pi_3)^3=((\pi_3\pi_1)^3)^{-1}\in K$
  and similarly, $\pi_i(\pi_1\pi_2)^4\pi_i^{-1}\in K$, so $K$ is normal in $P$.
  The map $\psi:\mathbb Z\times\mathbb Z\to P, (a,b)\mapsto (\pi_3\pi_1)^a(\pi_1\pi_2)^b$
  is one-to-one and has image $T$.  The subgroup generated by $(3,0)$ and $(0,4)$
  of $\mathbb Z\times\mathbb Z$ corresponds to $K$ under $\psi$.  Hence $K$ has index 12
  in $T$ and index 24 in $P$.

  \smallskip
  Let $h=(\pi_1\pi_2)^{-1}(\pi_3\pi_1)$ (the semitone) and $\rho=\pi_1$.  Then $hK$ has order 12 in $P/K$,
  $\rho K$ has order 2, and $(\rho K)(hK)(\rho K)^{-1}=(hK)^{-1}$.
  Thus, $hK$ and $\rho K$ generate a subgroup in $P/K$ isomorphic to $D_{12}$.\qed
\end{proof}

\section{Conclusion}

In Riemannian theory, the vertices in the infinite Tonnetz are labeled by notes such that the fifth
marks the horizontal direction.  All operations on the Tonnetz perserve the horizontal direction:
The Schritte are translations, and the Wechsel are products of a translation and a flip on the
horizontal axis.

\smallskip
Neo-Riemannian theory purifies the Tonnetz by removing the labels attached to the vertices, and by identifying
the triangles with chords.  This allows to redefine the operations in terms of more basic
reflections, which in turn give rise to new moves, in particular to rotations.

\smallskip
The group which defines the operations in Riemannian theory is the semi-direct product
$R=\mathbb Z_2\ltimes_\varphi T$ of the cyclic group of two elements by the group of
affine translations in the plane.  

\smallskip
By comparison, three reflections corresponding to PLR-moves generate 
the Coxeter group $\widetilde S_3$ which acts on the infinite Tonnetz.
Like $R$, the group $\widetilde S_3$ contains a subgroup, say $\widetilde T$, of translations;
actually $\widetilde T$ is isomorphic to $T$.  Unlike $R$, the translation subgroup has index six,
so there are many more elements in $\widetilde S_3$: reflections, $120^\circ$-rotations
and glide reflections. 

\smallskip
What has changed since Hugo Riemann introduced Schritte and Wechsel?  We still visualize music in the Tonnetz...
We still use algebra to describe the development of harmony...  Yet, the buiding blocks are more fundamental
and the operations have more variety.  Riemannian theory is very much alive.

\bigskip
Happy 170th Birthday, Hugo Riemann!

\bigskip
{\sc Acknowledgements.}  The author would like to thank Benjamin Br\"uck from Bielefeld, Germany,
for helful comments regarding the Coxeter group.  He is particularly grateful to Thomas Noll from Barcelona
since his thoughtful advice, in particular regarding the action of
interesting elements and subgroups of the Coxeter group on the Tonnetz,
has led to substantial improvements of the paper (which about doubled in size).


\end{document}